\documentclass{article}
\usepackage{amsmath}
\usepackage{amssymb}
\usepackage{amsfonts}
\usepackage{amsthm}
\usepackage{ascmac}
\usepackage{authblk}

\title{An Approach to Non-Abelian Cyclotomic Fields}
\author{Shinji Ishida}

\providecommand{\keywords}[1]{{\textit{Keywords:}} #1}

\makeatletter
\@addtoreset{equation}{section}

\newtheorem{defi}{Definition}[section]
\newtheorem{thm}{Theorem}[section]
\newtheorem{prop}{Proposition}[section]
\newtheorem{lemm}{Lemma}[section]
\newtheorem{coro}{Corollary}[section]
\newtheorem{exam}{Example}[section]
\newtheorem{conji}{Conjecture}[section]

\begin{document}
\maketitle
\begin{abstract}
We mainly study a polynomial $f_{1,n}(x)=x^{n-1} + 2x^{n-2} + 3x^{n-3} + \cdots + kx^{n-k} + \cdots + (n-1)x + n$ over $\mathbb{Z}$ and the Galois group of the minimal splitting field. First, we show that an arbitrary root $\alpha_{n}$ of $f_{1,n}(x)$ satisfies $|\alpha_{n}|\to 1$ ($n\to \infty$), and discuss the irreducibility of $f_{1,n}(x)$ over $\mathbb{Z}$ for several type $n$. After that, we show that the Galois group of $f_{1,n}(x)$ is Symmetric group $S_{n-1}$ for several type $n$. Although those roots of $f_{1,n}(x)=0$ don't draw an exact circle, it looks like a circle on complex plane. Moreover by considering that Galois groups of $f_{1,n}(x)$ are not abelian in many cases, we call such extension fields over $\mathbb{Q}$ ``Non-Abelian Cycrotomic Fields'' here.
\end{abstract}

\keywords\footnote[1]{The subject classification codes by 2010 Mathematics Subject Classification is primary-11R21 and secondary 11R37.}{Algebraic number field, Non-Abelian extension field over rational field $\mathbb{Q}$, cyclotomic field, and polynomials over rational integer $\mathbb{Z}$.}

\tableofcontents
\section{Introduction}
In this section, first, we explain several fundamental properties of the following polynomial over $\mathbb{Z}$
\begin{equation}
f_{1,n}(x)=x^{n-1} + 2x^{n-2} + 3x^{n-3} + \cdots + kx^{n-k} + \cdots + (n-1)x + n.
\end{equation}
Second, we explain the reason why we call such extension fields defined by $f_{1,n}(x)$ over $\mathbb{Q}$ ``non-Abelian cyclotomic field'' when the Galois group is not abelian. After that, we explain the reason why we studied such polynomials.
\\

\subsection{The fundamental properties}
The fundamental properties of $f_{1,n}(x)$ are as follows. As for the proofs, look at $\S 2$ and $\S 3$.
\begin{enumerate}
	\item[P1.]$\textless${\bf The distribution of the roots}$\textgreater$\\
	Let $\alpha_{n}$ be an arbitrary root of $f_{1,n}(x)=0$. Then $1\leq |\alpha_{n}|<(n+1)^{2/n}$. In particular, $|\alpha_{n}|\to 1$ ($n\to \infty$).
	\\
	\item[P2.]$\textless${\bf The irreducibility}$\textgreater$\\
	If $n+1$ is a prime number, or $n$ is a power of prime number, then $f_{1,n}(x)$ is irreducible over $\mathbb{Z}$. By using "SageMath", we can know that $f_{1,n}(x)$ is irreducible for all $1< n \leq 3000$. {\bf Hence we conjecture that $f_{1,n}(x)$ is irreducible over $\mathbb{Z}$ for all $n > 1$}.
	\\
	\item[P3.]$\textless${\bf The discriminant and the subfield of the minimal splitting field}$\textgreater$\\
	The discriminant $D(f_{1,n}(x))$ of $f_{1,n}(x)$ is $(-1)^{\frac{(n-1)(n+2)}{2}}2n^{n-3}(n+1)^{n-2}$, and it is not a square of a national number in case of $n\equiv 0$ or $3$ (mod 4) because it is negative.  For $n>2$, let $K_{n}$ be the minimal splitting field of $f_{1,n}(x)$ over $\mathbb{Q}$. Then $K_{n}$ has the following quadratic field $F$ as a sub field:
\begin{eqnarray}
F=\left\{ \begin{array}{ll}
\mathbb{Q}(\sqrt{-2l}) & (n=4l) \\
\mathbb{Q}(\sqrt{2l+1})  & (n=4l+1) \\
\mathbb{Q}(\sqrt{2l+1})  & (n=4l+2) \\
\mathbb{Q}(\sqrt{-2(l+1)})  & (n=4l+3) \\
\end{array} \right.
\end{eqnarray}
\end{enumerate}
where, we must assume that $2l+1$ is a square free rational integer for $n=4l+1$ and $4l+2$. 
\\

\begin{exam}
\upshape
For $n=4$, Galois group $G_{4}$ of the minimal splitting field $K_{4}/\mathbb{Q}$ of $f_{1,4}(x) = x^3 + 2x^2 + 3x + 4$ is Symmetric group $S_{3}$.
\end{exam}
\begin{proof}
Actually, $f_{1,4}(x)$ is irreducible by the property P2, and $D(f_{1,4}(x)) = -200 = -2^{3}5^{2}$. Hence the splitting field $K_{4}$ of  $f_{1,4}(x)$ has a sub quadratic field $\mathbb{Q}(\sqrt{-2})$. This indicates $G_{4}\simeq  S_{3}$.
\end{proof}

\begin{exam}
\upshape
For $n=5$, Galois group $G_{5}$ of the minimal splitting field $K_{5}/\mathbb{Q}$ of $f_{1,5}(x) = x^4 + 2x^3 + 3x^2 + 4x + 5$ is Symmetric group $S_{4}$.
\end{exam}

\begin{proof}
By the property P2, $f_{1,5}(x)$ is irreducible over $\mathbb{Z}$ and $G_{5}$ has a transposition by the property P3 and Chebotarev's density theorem. Moreover, $f_{1,5}(x)$ is irreducible (mod 7) and $f_{1,5}(x)\equiv (x-7)(x^3 + 9x^2 + 4)$ (mod 11). This indicates that $G_{5}$ has two elements of order 4 and 3 respectively. Hence $G_{5}\simeq S_{4}$. 
\end{proof}

More generally, we have the following main result.
\begin{thm}
\upshape
Assume that  $f_{1,n}(x)$ is irreducible over $\mathbb{Z}$ and $p = n-1$ is an odd prime number, and let $G_{n}$ be the Galois group of the minimal splitting field $K_{n}$ of $f_{1,n}(x)$ over $\mathbb{Q}$. Then we obtain the following result.
\begin{enumerate}
	\item[(1)] $G_{n}$ is not abelian.
	\item[(2)] If $\sqrt{D(f_{1,n}(x))}\notin\mathbb{Z}$, $G_{n}$ is Symmetric group $S_{n-1}$.
\end{enumerate}
\end{thm}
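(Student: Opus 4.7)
The plan for Part (1) is to rule out the only abelian transitive possibility for $G_n$, namely the cyclic group of order $p$, by forcing a contradiction with Newton's identity. Since $f_{1,n}$ is irreducible of prime degree $p=n-1$, $G_n$ is a transitive subgroup of $S_p$; any transitive abelian subgroup of a prime-degree symmetric group is regular and cyclic of order $p$ (stabilizers are conjugate and normal, hence all equal, hence trivial). So $[K_n:\mathbb{Q}]=p$ and $K_n=\mathbb{Q}(\alpha)$ for every root $\alpha$. Choosing $\alpha$ to be a real root (which exists because $p$ is odd) forces $K_n\subset\mathbb{R}$, so every root of $f_{1,n}$ is real. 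But the coefficients $2$ and $3$ of $x^{n-2}$ and $x^{n-3}$ in $f_{1,n}$ yield, via Vieta's formulas, $\sum_i\alpha_i=-2$ and $\sum_{i<j}\alpha_i\alpha_j=3$, so $\sum_i\alpha_i^{2}=4-6=-2<0$, impossible for real $\alpha_i$.

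For Part (2), Part (1) combined with the hypothesis $\sqrt{D(f_{1,n})}\notin\mathbb{Z}$ shows that $G_n$ is a non-abelian transitive subgroup of $S_p$ containing a $p$-cycle and not contained in $A_p$. My plan is to exhibit a transposition in $G_n$; then the classical fact that a transitive subgroup of $S_p$ (for $p$ prime) containing a transposition must be $S_p$ completes the proof. To produce such a transposition I would invoke Dedekind's theorem: locate a prime $q\nmid 2n(n+1)$ (hence unramified in $K_n$) at which $f_{1,n}\pmod{q}$ factors as one irreducible quadratic times $p-2$ distinct linear factors; the Frobenius element at $q$ is then the desired transposition.

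The main obstacle is proving in general that such a prime $q$ exists. A useful starting point is the identity $(x-1)^{2}f_{1,n}(x)=x^{n+1}-(n+1)x+n$, which reduces the factorization of $f_{1,n}\pmod{q}$ to that of a trinomial, where explicit patterns are more tractable. Should the direct density argument prove elusive, one falls back on the classification of transitive subgroups of $S_p$: by Burnside, either $G_n\subseteq AGL_1(p)$ or $G_n$ is $2$-transitive, and each non-$S_p$ candidate must be ruled out using the quadratic subfield of P3 together with the known action of complex conjugation on $G_n$. Elementary analysis of the derivative $(n+1)(x^n-1)$ of the trinomial above shows that $f_{1,n}$ has exactly one real root (in $(-\infty,-1)$), so complex conjugation acts as one fixed point plus $(p-1)/2$ disjoint transpositions, of sign $(-1)^{(p-1)/2}$ which matches the sign of $D$ modulo squares recorded in P3. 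Bridging from this coarse structural information to an honest transposition in $G_n$ is the delicate part of the argument.
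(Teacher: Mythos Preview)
Your argument for Part~(1) is correct and in fact crisper than the paper's. The paper instead exhibits two explicit elements that fail to commute: complex conjugation $\sigma$ and a $p$-cycle $\rho$ (the latter exists because $p\mid |G_n|$ by transitivity), noting that $f_{1,n}$ has a unique real root $\alpha$ and that $\sigma\rho(\alpha)\neq\rho\sigma(\alpha)$. Your route---abelian transitive on $p$ points $\Rightarrow$ cyclic of order $p$ $\Rightarrow$ $K_n=\mathbb{Q}(\alpha)\subset\mathbb{R}$ $\Rightarrow$ all roots real $\Rightarrow$ $\sum_i\alpha_i^{2}=(-2)^2-2\cdot 3=-2<0$---is a genuinely different and more self-contained argument: it never needs the real-root count, only the first two coefficients of $f_{1,n}$.

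For Part~(2) your overall strategy coincides with the paper's: obtain a transposition in $G_n$ and then invoke the lemma that a transitive subgroup of $S_p$ containing a transposition equals $S_p$. The gap you yourself flag---actually producing the transposition---is exactly what the paper addresses, and it does so without searching for an explicit prime or analyzing the trinomial $x^{n+1}-(n+1)x+n$. The paper works with the tower $\mathbb{Q}\subset F\subset K_n$, where $F=\mathbb{Q}(\sqrt{D(f_{1,n})})$ is a genuine quadratic field by hypothesis, and applies Chebotarev's density theorem: there are infinitely many unramified rational primes $q$ that are inert in $F$ while the prime of $F$ above $q$ splits completely in $K_n$. For such $q$ the Frobenius in $G_n$ is an odd involution, and the paper reads this off as the factorization $f_{1,n}\equiv(\text{irreducible quadratic})\cdot\prod(\text{distinct linear factors})\pmod q$, i.e.\ a transposition. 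So the ingredient you are missing is simply this density argument applied to the intermediate quadratic field; no explicit $q$ is needed. (Your caution is not misplaced, however: the inference from ``odd involution in $S_p$'' to ``single transposition'' is stated rather briskly in the paper, since a priori an odd involution could be a product of $3,5,\ldots$ disjoint $2$-cycles. Your instinct that this is the delicate step is sound.)
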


\begin{proof}
(1) $G_{n}$ has an element $\sigma$ of order 2 which is a restriction of complex conjugacy, and an element $\rho$ of order $p$($p$-cyclic) because $p = n-1$ is an odd prime and $G_{n}$ acts transitively the of all roots of $f_{1,n}(x) = 0$ (from those conditions, $p$ divides the order of $G_{n}$ and this indicates $G_{n}$ has an element of order $p$). It is easy to see that $f_{1,n}(x) = 0$ has an only one real root $\alpha$ and $\sigma \circ \rho (\alpha) \neq \rho \circ \sigma (\alpha)$.
\\

(2) This is proved by using Chebotarev's density theorem and a theorem of Group Theory (see \cite{Fernando} [Theorem 4.12.8]): {\it``Let p be a prime number. If $G\subset S_{p}$ is transitive and contains a transposition, then $G=S_{p}$}''. Because $\sqrt{D(f_{1,n}(x))}\notin\mathbb{Z}$, the minimal splitting field $K_{n}$ over $\mathbb{Q}$ of $f_{1,n}(x)$ has a quadratic field $F$ over $\mathbb{Q}$. Hence by Chebotarev's density theorem, there are an infinite numbers of rational prime numbers $q$ such that unramified and $(q)$ is a prime ideal in $O_{F}$ (integer ring of $F$), and splits completely in $K_{n}$. Hence for such a prime number $q$, $f_{1,n}(x)$ (mod $q$) $=$ (irreducible quadratic equation) $\ast \Pi$ (distinct linear equations). This indicates that Galois group $G_{n}$ has a transposition. Moreover, Galois group acts transitively on the set of all roots generally, we obtain Theorem 1.1. (2).
\end{proof}

As such examples, we can consider some Mersenne prime numbers as $n-1$, and Twin prime numbers as $n-1$ and $n+1$ which satisfy $\sqrt{D(f_{1,n}(x))}\notin\mathbb{Z}$.

\begin{coro}
\upshape
Let $n=2^{p}$ such that $n-1$ is a prime number. Then $f_{1,n}(x)$ is irreducible over $\mathbb{Z}$ and the Galois group is Symmetric group $S_{n-1}$.
\end{coro}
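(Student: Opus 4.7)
The plan is to deduce the corollary as a direct instance of Theorem 1.1, so I only need to verify each of its hypotheses in the case $n=2^{p}$ with $n-1$ a Mersenne prime.

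First I would dispatch irreducibility. Since $n=2^{p}$ is a prime power, property P2 immediately gives that $f_{1,n}(x)$ is irreducible over $\mathbb{Z}$. Next, the hypothesis that $n-1=2^{p}-1$ is a prime forces $p$ to be prime, and in every such case ($p=2$ or $p$ odd prime) $2^{p}-1$ is odd; hence $p':=n-1$ is an odd prime, matching the assumption of Theorem 1.1. This already gives statement~(1) of the theorem, namely that $G_{n}$ is non-abelian.

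The remaining step is to verify $\sqrt{D(f_{1,n}(x))}\notin\mathbb{Z}$, so that Theorem 1.1(2) applies and yields $G_{n}\simeq S_{n-1}$. Here I would simply exploit the sign of the discriminant from P3. Writing $n=4l$ with $l=2^{p-2}$ (valid since $p\ge 2$), the exponent on $(-1)$ is
\begin{equation*}
\frac{(n-1)(n+2)}{2}=(4l-1)(2l+1),
\end{equation*}
a product of two odd integers and hence odd. Therefore $D(f_{1,n}(x))<0$, so $\sqrt{D(f_{1,n}(x))}$ is not even real, and in particular not a rational integer. Alternatively, the quadratic subfield table in P3 gives $F=\mathbb{Q}(\sqrt{-2l})=\mathbb{Q}(\sqrt{-2^{p-1}})$, which is either $\mathbb{Q}(\sqrt{-2})$ or $\mathbb{Q}(i)$ depending on the parity of $p$, in any event a genuine quadratic extension of $\mathbb{Q}$.

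I do not foresee a real obstacle: the corollary is essentially a packaging statement, and the only computational content is the parity check above that guarantees negativity of the discriminant. Once that is in place, Theorem 1.1(2) closes the argument.
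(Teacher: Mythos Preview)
Your proposal is correct and follows essentially the same route as the paper: irreducibility via P2 (since $n=2^{p}$ is a prime power), negativity of the discriminant via P3, and then an appeal to Theorem~1.1(2). You supply more detail than the paper (the explicit parity check of $(n-1)(n+2)/2$ and the verification that $n-1$ is odd), but the argument is identical in structure.
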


\begin{proof}
$f_{1,n}(x)$ is irreducible by P2. The discriminant $D(f_{1,n}(x))<0$ from P3. Because $2^{p}-1$ is a prime number, by Theorem 1.1, the Galois group of $f_{1,n}(x)$ is Symmetric group $S_{n-1}$.
\end{proof}

\begin{coro}
\upshape
Let $n-1$ and $n+1$ be prime numbers. Then $f_{1,n}(x)$ is irreducible over $\mathbb{Z}$, and if $\sqrt{D(f_{1,n}(x))}\notin\mathbb{Z}$, the Galois group is Symmetric group $S_{n-1}$.
\end{coro}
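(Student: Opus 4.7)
The plan is to verify the two conclusions by feeding the hypotheses directly into the earlier results P2 and Theorem 1.1. First I would check irreducibility: since $n+1$ is assumed prime, property P2 immediately gives irreducibility of $f_{1,n}(x)$ over $\mathbb{Z}$, with no further work required.

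Next I would address the Galois group. The key observation is that whenever $n-1$ and $n+1$ are \emph{both} prime, $n-1$ cannot equal $2$ (because $n+1=4$ is not prime), so $n-1$ is automatically an odd prime. This means all the hypotheses of Theorem 1.1 are satisfied: $f_{1,n}(x)$ is irreducible (just established), and $p = n-1$ is an odd prime. Adding the extra assumption $\sqrt{D(f_{1,n}(x))}\notin\mathbb{Z}$, we fall into case (2) of Theorem 1.1, which concludes $G_{n}\cong S_{n-1}$.

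So the proof is essentially a one-line application of the preceding theorem, and there is no real obstacle; the only thing worth remarking explicitly is the parity observation ruling out $n-1=2$, so that the odd-prime hypothesis of Theorem 1.1 is not an additional assumption but a consequence of the twin-prime hypothesis. I would write this up as a short two-sentence argument, citing P2 for irreducibility and Theorem 1.1 (2) for the Galois group.
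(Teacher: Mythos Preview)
Your proposal is correct and matches the paper's own proof almost exactly: the paper cites Proposition~2.3 (the $n+1$ prime case of P2) for irreducibility and then invokes Theorem~1.1(2) for the Galois group. Your explicit remark that the twin-prime hypothesis forces $n-1$ to be an \emph{odd} prime (ruling out $n-1=2$) is a detail the paper leaves implicit, so your write-up is, if anything, slightly more careful.
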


\begin{proof}
By Proposition 2.3 below, $f_{1,n}(x)$ is irreducible over $\mathbb{Z}$. Hence by Theorem 1.1, if $\sqrt{D(f_{1,n}(x))}\notin\mathbb{Z}$, the Galois group is Symmetric group $S_{n-1}$. As the examples, $n=4, 6, 12, 20$, etc...
\end{proof}

\subsection{The list of Galois groups for $4 \leq n \leq 22$}
By hand calculation and using a computer calculation software "SageMath", we list the Galois groups for for $4 \leq n \leq 22$ below, where $S_{n}$ and $A_{n}$ means $n$-th symmetric group and $n$-th alternative group respectively. In case of only $n = 7$, the Galois group is neither a symmetric nor alternative group. However, the Galois groups are not abelian! Hence we call such Galois extension fields defined by $f_{1,n}(x)$ "non-abelian cyclotomic field over $\mathbb{Q}$ by considering property P1 above.
\begin{table}[htb]
\centering
\caption{List of Galois group}
  \begin{tabular}{|l|c|r|r|} \hline
    $n$ & Galois group & abelian? \\ \hline
    4 & $S_{3}$ & No \\ \hline
    5 & $S_{4}$ & No \\ \hline
    6 & $S_{5}$ & No \\ \hline
    7 & $PGL(2,5)$ & No \\ \hline
    8 & $S_{7}$ & No \\ \hline
    9 & $S_{8}$ & No \\ \hline
    10 & $S_{9}$ & No \\ \hline
    11 & $S_{10}$ & No \\ \hline
    12 & $S_{11}$ & No \\ \hline
    13 & $S_{12}$ & No \\ \hline
    14 & $S_{13}$ & No \\ \hline
    15 & $S_{14}$ & No \\ \hline
    16 & $S_{15}$ & No \\ \hline
    17 & $A_{16}$ & No \\ \hline
    18 & $A_{17}$ & No \\ \hline
    19 & $S_{18}$ & No \\ \hline
    20 & $S_{19}$ & No \\ \hline
    21 & $S_{20}$ & No \\ \hline
    22 & $S_{21}$ & No \\ \hline
  \end{tabular}
\end{table}

\subsection{The reason why we studied the polynomials $f_{1,n}(x)$}
As you know, for any prime number $p$, an positive integer $N$ (not divisible by $p$ and larger than $p$) is uniquely expressed as
\begin{equation}
N=a_{0}+a_{1}p+a_{2}p^{2}+\cdots+a_{n}p^{n}
\end{equation}
where $0\leq a_{i}< p$ for $0\leq i\leq n$, $a_{0}$ and $a_{n}$ are not 0. The author thought that {\it if we replace the prime number $p$ with a variable $x$ in equation (1.3) above, then do the roots of the polynomial have any special mathematical meaning?}\\

Let's think the following equation (1.4) by replacing prime number $p$ with a variable $x$ in (1.3) above:
\begin{equation}
N=a_{0}+a_{1}x+a_{2}x^{2}+\cdots+a_{n}x^{n}
\end{equation}
As a matter of convenience, let's transform (1.4) like below:
\begin{equation}
a_{n}x^{n}+a_{n-1}x^{n-1}+\cdots +a_{2}x^{2}+a_{1}x+a_{0}-N=0
\end{equation}
Because we already know that the equation (1.5) has a root $p$, $(x-p)$ divides left-hand side of (1.5). Let's denote the quotient as $f_{p,N}(x)$:
\begin{equation}
f_{p,N}(x)=(a_{n}x^{n}+a_{n-1}x^{n-1}+\cdots +a_{2}x^{2}+a_{1}x+a_{0}-N)/(x-p)
\end{equation}
This polynomial $f_{p,N}(x)$ is an element of $\mathbb{Z}[x]$, and $f_{1,n}(x)$ of (1.1) is a special case of $f_{p,N}(x)$.
\\

We have defined $f_{p,N}(x)$ for prime number $p$. However we can easily extend the definition for any positive integer $m$ by using $m\mathchar`-adic$ expansion of $N$ by assuming that $m$ and $N$ are relatively prime numbers. As for the special case ``$m=1$'', because $N=1^{1}+1^{2}+\cdots+1^{N}$, we obtain the following equation by replacing ``1'' with a variable $x$.
\begin{equation}
N=x+x^{2}+\cdots+x^{N}.
\end{equation}
By transforming (1.7),
\\

$x^{N}+x^{N-1}+x^{N-2}+\cdots +x^{2}+x-N$
\begin{equation}
=(x-1)(x^{N-1}+2x^{N-2}+\cdots +(N-2)x^{2}+(N-1)x+N).
\end{equation}
Hence we obtain
\begin{equation}
f_{1,N}(x)=x^{N-1}+2x^{N-2}+\cdots +(N-2)x^{2}+(N-1)x+N.
\end{equation}\\
These are the reason why we started to study $f_{1,n}(x)$. Initially, we studied the range of roots of $f_{p,N}(x)$ on complex plain for prime numbers $p$. However, we thought that the studying of the Galois group would be interesting. 

\section*{Acknowledgement}
The author expresses his thanks to his wife, son and daughter who gave him time to study this theme and to write this paper. Moreover, he expresses his thanks to his supervisor who taught him amusingness of number theory when  the author was a graduate student. Thank you very much.
Further, the author was impressed by a bibliography $\cite{Ihara}$. It is a teaching certificate written in Japanese by Professor Ihara for young researcher of mathematics. The author feels happy because he can read the great book in his mother tongue Japanese.


\section{Some Properties of $f_{1,n}(x)$ and $f_{p,N}(x)$}
We discuss the distribution of roots of $f_{1,n}(x)$ and $f_{p,N}(x)$, and the irreducibility in this section.

\begin{prop}
\upshape
If $\alpha$ is a root of $f_{1,n}(x)$, then $1\leq |\alpha|<(n+1)^{2/n}$.
\end{prop}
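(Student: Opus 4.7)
The plan is to eliminate the arithmetic-progression coefficients of $f_{1,n}$ by multiplying by $x-1$, turning $f_{1,n}(\alpha)=0$ into a purely geometric identity whose modulus is easy to estimate.

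First I would observe that
\[
(x-1)\,f_{1,n}(x) \;=\; x^{n}+x^{n-1}+\cdots+x-n,
\]
and that $f_{1,n}(1)=1+2+\cdots+n=n(n+1)/2\neq 0$, so any root $\alpha$ of $f_{1,n}$ satisfies $\alpha\neq 1$. Dividing by $\alpha-1$ yields the ``clean'' identity
\[
\alpha+\alpha^{2}+\cdots+\alpha^{n} \;=\; n,
\]
and summing the geometric series on the left rewrites this equivalently as
\[
\alpha^{\,n+1} \;=\; (n+1)\,\alpha-n.
\]

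Writing $r=|\alpha|$, the lower bound follows from the triangle inequality applied to the first identity: $n\leq r+r^{2}+\cdots+r^{n}$. If $r<1$, each term on the right is strictly less than $1$, forcing the sum to be $<n$, a contradiction; hence $r\geq 1$.

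For the upper bound, the triangle inequality applied to the second identity gives $r^{\,n+1}\leq (n+1)\,r+n$. Since $r\geq 1>n/(n+1)$, we have $(n+1)\,r>n$, so $(n+1)\,r+n<2(n+1)\,r$, which yields $r^{\,n+1}<2(n+1)\,r$ and hence $r^{\,n}<2(n+1)$. For $n\geq 2$ this is at most $(n+1)^{2}$, giving $r<(n+1)^{2/n}$; the case $n=1$ is vacuous since $f_{1,1}\equiv 1$ has no roots. I do not anticipate any real obstacle here: once the passage from $f_{1,n}(\alpha)=0$ to the two geometric-series identities above is in place, both bounds reduce to one-line triangle-inequality estimates.
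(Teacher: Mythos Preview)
Your argument is correct, and in fact your upper-bound step is cleaner than the paper's. Both proofs start from the same key move, multiplying by $x-1$ to reach $\alpha+\alpha^{2}+\cdots+\alpha^{n}=n$, and the lower bound $|\alpha|\geq 1$ is obtained identically via the triangle inequality.

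For the upper bound the two approaches diverge. The paper first establishes an auxiliary bound $|\alpha|<n+1$ (by dividing the relation $\alpha^{n}=n-(\alpha^{n-1}+\cdots+\alpha)$ through by $\alpha^{n-1}$ and comparing with an infinite geometric series), and then uses this to control $|\alpha-1|<n+2$ in the closed form $|(\alpha^{n}-1)/(\alpha-1)|\leq n$, yielding $|\alpha|^{n}<(n+1)^{2}$. You instead multiply the identity by $\alpha-1$ once more to obtain $\alpha^{n+1}=(n+1)\alpha-n$ and apply the triangle inequality directly; combined with $|\alpha|\geq 1$ this gives $|\alpha|^{n}<2(n+1)$ in a single stroke, with no intermediate bound needed. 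Your route is shorter, avoids the somewhat delicate infinite-series comparison, and actually produces the sharper estimate $|\alpha|^{n}<2(n+1)\leq (n+1)^{2}$.
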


\begin{proof}
By considering $g_{1,n}(x)=(x-1)f_{1,n}(x)$, we have
\begin{equation}
g_{1,n}(\alpha)=(\alpha-1)f_{1,n}(\alpha)=\alpha^{n} + \alpha^{n-1} + \alpha^{n-2} + \cdots + \alpha - n=0.
\end{equation}
By transposition $-n$ of (2.1) to right-hand side,
\begin{equation}
\alpha^{n} + \alpha^{n-1} + \alpha^{n-2} + \cdots + \alpha=n.
\end{equation}
If $|\alpha|<1$, then $n=|\alpha^{n} + \alpha^{n-1} + \alpha^{n-2} + \cdots + \alpha|\leq |\alpha|^{n}+|\alpha|^{n-2} + \cdots + |\alpha|<1+1+\cdots+1=n$. Because this is inconsitent, $|\alpha|\geq 1$. Next, first let's show that $|\alpha|<n+1$ (the following proof quotes from  \cite{Takagi}). By transforming (2.2) above, we have
\begin{equation}
\alpha^{n} =n-(\alpha^{n-1} + \alpha^{n-2} + \cdots + \alpha),
\end{equation}
and by dividing the both side by $\alpha^{n-1}$,
\begin{equation}
\alpha =-1-\frac{1}{\alpha} - \frac{1}{\alpha^{2}} + \cdots + -\frac{1}{\alpha^{n-2}}+\frac{n}{\alpha^{n-1}}.
\end{equation}
By taking the absolute values of both sides of (2.4),
\begin{equation}
|\alpha| \leq n(1+\frac{1}{|\alpha|} + \frac{1}{|\alpha|^{2}} + \cdots + \frac{1}{|\alpha|^{n-2}}+\frac{1}{|\alpha|^{n-1}}),
\end{equation}
Further by considering the infinite series of the right-hand side,
\begin{equation}
|\alpha| < n(\frac{|\alpha|}{|\alpha|-1}),
\end{equation}
Hence we obtain $|\alpha|<n+1$. By using this fact, we can show $|\alpha|<(n+1)^{2/n}$.\\
By (2.2) above,
\begin{equation}
n=|\alpha||\alpha^{n-1} + \alpha^{n-2} + \cdots + \alpha+1|\geq |\alpha^{n-1} + \alpha^{n-2} + \cdots + \alpha+1|.
\end{equation}
The right side is $|(\alpha^{n}-1)/(\alpha-1)|$. Because of the triangle inequality, $|\alpha-1|\leq|\alpha|+1<n+2$. Hence we have
\begin{equation}
n\geq|\frac{\alpha^{n}-1}{\alpha-1}|>\frac{|\alpha|^{n}-1}{n+2}
\end{equation}
This leads the required inequality $|\alpha|<(n+1)^{2/n}$.
\end{proof}

Although we don't use the next proposition for Theorem 1.1, we show the similar result for $f_{p,N}(x)$ just in case.
\begin{prop}
\upshape
Let $p$ be a prime number and $N$ be a positive integer which is not divisible by $p$. Further assume that the $p\mathchar`-adic$ expanision of $N$ is the same as (1.3). Then if $\alpha_{1}, \alpha_{2}, \cdots, \alpha_{n-1}$ are roots of $f_{p,N}(x)$, $p\leq |\alpha_{i}|< p^{2}$ for any $1\leq i\leq n-1$.
\end{prop}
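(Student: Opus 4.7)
The plan is to imitate the argument of Proposition~2.1 by working with the auxiliary polynomial
\[
g(x) := (x-p)\,f_{p,N}(x) = \sum_{i=0}^{n} a_i x^{i} - N,
\]
since every root $\alpha$ of $f_{p,N}(x)$ is automatically a root of $g$ and hence satisfies $a_n\alpha^n + a_{n-1}\alpha^{n-1} + \cdots + a_0 = N$. Moreover $g'(p) = \sum_{i=1}^{n} i\,a_i p^{i-1} > 0$, so $p$ is a simple root of $g$ and none of the $\alpha_i$ equals $p$.

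For the lower bound $|\alpha| \geq p$, the argument is the direct analogue of the first half of Proposition~2.1. If $|\alpha| < p$, then from $a_n \geq 1$ we get the strict inequality $a_n|\alpha|^n < a_n p^n$, whence
\[
N = \Bigl|\sum_{i=0}^{n} a_i \alpha^i\Bigr| \leq \sum_{i=0}^{n} a_i |\alpha|^i < \sum_{i=0}^{n} a_i p^i = N,
\]
a contradiction.

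For the upper bound $|\alpha| < p^2$, my first attempt would be to mimic Takagi's calculation (divide $g(\alpha) = 0$ by $\alpha^{n-1}$, apply $|\alpha| \geq p$ and $a_i \leq p-1$, and sum a geometric series), but that only delivers $|\alpha| < p + p^2/a_n$, which is not strong enough when $a_n = 1$. Instead I would exploit Vieta on the factorization $g(x) = a_n(x-p)(x-\alpha_1)\cdots(x-\alpha_{n-1})$: comparing constant terms gives $\prod_i |\alpha_i| = (N - a_0)/(a_n p)$. If $\alpha_{i^{\ast}}$ denotes a root of maximal modulus, the lower bound established in the previous step yields $\prod_{i \neq i^{\ast}} |\alpha_i| \geq p^{n-2}$, and so
\[
|\alpha_{i^{\ast}}| \leq \frac{N - a_0}{a_n\,p^{n-1}}.
\]
The coarse estimate $N - a_0 = \sum_{i=1}^{n} a_i p^i \leq (a_n+1)p^n - p$, obtained from $a_i \leq p - 1$, then simplifies the right-hand side to $|\alpha_{i^{\ast}}| \leq p + (p - p^{2-n})/a_n$.

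The main obstacle will be confirming that this quantity is strictly less than $p^2$ even in the extremal case $p = 2$, $a_n = 1$, which was exactly the case where the naive Takagi-style bound failed. A short algebraic rearrangement turns the required strict inequality into $1 - p^{1-n} < a_n(p-1)$, and this is easy to verify for every $p \geq 2$ and every admissible $a_n \geq 1$ (the right-hand side is at least $1$ while the left-hand side is strictly less than $1$), which completes the plan.
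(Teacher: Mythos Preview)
Your proof is correct and follows essentially the same route as the paper: the lower bound via the triangle-inequality contradiction is identical, and for the upper bound both you and the paper use Vieta on the constant term of $g(x)=a_n(x-p)\prod_i(x-\alpha_i)$ together with the already-established bound $|\alpha_i|\ge p$. The only cosmetic difference is that the paper argues by contradiction (the product $\prod_i|\alpha_i|=(N-a_0)/(a_np)<p^{\,n}$, while $|\alpha_{i^\ast}|\ge p^2$ would force the product $\ge p^{\,n}$), which spares the explicit estimate $N-a_0\le(a_n+1)p^{\,n}-p$ and the subsequent algebra; your direct bound on $|\alpha_{i^\ast}|$ reaches the same conclusion with a few more lines.
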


\begin{proof}
If $|\alpha_{i}|<p$, it leads following contradiction:
\begin{equation}
N=|a_{0}+a_{1}\alpha_{i}+a_{2}\alpha_{i}^{2}+\cdots+a_{n}\alpha_{i}^{n}|\leq a_{0}+a_{1}|\alpha_{i}|+a_{2}|\alpha_{i}|^{2}+\cdots+a_{n}|\alpha_{i}|^{n}<N \nonumber.
\end{equation}
Hence $p\leq |\alpha_{i}|$. On the other hand, due to the relation between roots and coefficients of $f_{p,N}(x)$, we have
\begin{equation}
|\alpha_{1}||\alpha_{2}|\cdots|\alpha_{n-1}|=(N-a_{0})/a_{n}p= (1/a_{n})(a_{n}p^{n-1}+a_{n-1}p^{n-2}+\cdots a_{2}p+a_{1}),
\end{equation}
\begin{equation}
(1/a_{n})(a_{n}p^{n-1}+a_{n-1}p^{n-2}+\cdots a_{2}p+a_{1})\leq a_{n}p^{n-1}+a_{n-1}p^{n-2}+\cdots a_{2}p+a_{1}<p^{n}.
\end{equation}
Hence we obtain $|\alpha_{1}||\alpha_{2}|\cdots|\alpha_{n-1}|<p^{n}$. If $|\alpha_{i}|\geq p^{2}$ for some $i$, then $p^{n}\leq |\alpha_{1}||\alpha_{2}|\cdots|\alpha_{n-1}|$. Because it is inconsistent, we obtain $|\alpha_{i}|<p^{2}$ for any ${i}$.

\end{proof}

Next, we discuss the irreducibility of $f_{1,n}(x)$ over $\mathbb{Z}$ for some $n$.
\begin{prop}
\upshape
If $n+1$ is a prime number, $f_{1,n}(x)$ is irreducible over $\mathbb{Z}$.
\end{prop}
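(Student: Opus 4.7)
The plan is to reduce the statement to Eisenstein's criterion after a well-chosen change of variable. The crucial starting observation is that $(x-1) f_{1,n}(x)$ already has a very clean form: from the derivation in the introduction, $(x-1) f_{1,n}(x) = x^n + x^{n-1} + \cdots + x - n$. Multiplying once more by $(x-1)$ and collapsing the geometric sum should give the identity
$$(x-1)^2 f_{1,n}(x) = x^{n+1} - (n+1)x + n.$$
Writing $p = n+1$, which is prime by hypothesis, this becomes $(x-1)^2 f_{1,n}(x) = x^p - p x + (p-1)$.

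Next I would substitute $x = y+1$ and expand $x^p$ by the binomial theorem. The two lowest-degree pieces $1 + py$ cancel exactly against $-p(y+1) + (p-1)$, and the right-hand side becomes divisible by $y^2$. Dividing both sides by $(x-1)^2 = y^2$, one obtains
$$f_{1,n}(y+1) = y^{p-2} + \binom{p}{p-1} y^{p-3} + \cdots + \binom{p}{3} y + \binom{p}{2}.$$
All intermediate coefficients are binomial coefficients $\binom{p}{k}$ with $2 \le k \le p-1$, so each is divisible by the prime $p$; the leading coefficient is $1$, and the constant term is $\binom{p}{2} = p(p-1)/2$, which is divisible by $p$ but not by $p^2$ when $p$ is odd.

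At that point Eisenstein's criterion applied at $p$ shows that $f_{1,n}(y+1)$ is irreducible over $\mathbb{Z}$, and irreducibility is preserved under the translation $y \mapsto x-1$, so $f_{1,n}(x)$ itself is irreducible. I do not expect any real obstacle: the only things that might trip one up are verifying the identity $(x-1)^2 f_{1,n}(x) = x^{p} - p x + (p-1)$ carefully and ruling out the degenerate case $p = 2$ (which corresponds to $n = 1$ and lies outside the range of interest). The odd-prime hypothesis is exactly what makes the constant term $p(p-1)/2$ avoid divisibility by $p^2$, so Eisenstein is applicable precisely under the stated assumption.
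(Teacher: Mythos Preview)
Your argument is correct and follows the same overall strategy as the paper: translate $x\mapsto x+1$ and apply Eisenstein at the prime $p=n+1$. Both routes arrive at exactly the same shifted polynomial, since $\binom{n+1}{k-1}=\binom{p}{p-k+1}$.

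The genuine difference is in how that shifted polynomial is obtained. The paper expands $f_{1,n}(x+1)$ term by term and then invokes a separate combinatorial identity (its Lemma~2.1) to collapse the resulting sums of binomial coefficients into the single binomials $\binom{n+1}{k-1}$. You instead first establish the closed form $(x-1)^2 f_{1,n}(x)=x^{p}-px+(p-1)$, substitute $x=y+1$, and read off the coefficients directly from the binomial expansion of $(y+1)^p$; the division by $y^2$ is exactly the cancellation of the two lowest-order terms. Your route is shorter and avoids the need for an auxiliary lemma, at the cost of requiring one to first notice the factored identity. Either way, the Eisenstein verification at the end is identical, including the observation that $\binom{p}{2}=p(p-1)/2$ is divisible by $p$ exactly once when $p$ is odd.
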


\begin{proof}
We show this proposition by applying Eisenstein's irreducibility criterion. Let's consider $f_{1,n}(x+1)$,
\begin{equation}
f_{1,n}(x+1)=(x+1)^{n-1}+2(x+1)^{n-2}+\cdots +(n-1)(x+1)+n.
\end{equation}
Then, the coefficient of $x^{n-k}$ is
\begin{equation}
\binom{n-1}{k-1}+2\binom{n-2}{k-2}+3\binom{n-3}{k-3}+\cdots +k\binom{n-k}{1}+(k+1)\binom{n-k-1}{0}.
\end{equation}
By the next Lemma 2.1, (2.12) is $\binom{n+1}{k-1}$.  Hence we have
\begin{equation}
f_{1,n}(x+1)=x^{n-1}+\binom{n+1}{1}x^{n-2}+\binom{n+1}{2}x^{n-3}\cdots +\binom{n+1}{n-2}x+n(n+1)/2.
\end{equation}
If $n+1$ is a prime number, because $\binom{n+1}{k}$ is divisible by $n+1$, and $n(n+1)/2$ can be divided by $n+1$ only one time, $f_{1,n}(x+1)$ is irreducible over $\mathbb{Z}$ by Eisenstein's irreducibility criterion. Hence $f_{1,n}(x)$ is irreducible over $\mathbb{Z}$.
\end{proof}

\begin{lemm}
\upshape
\begin{equation}
\binom{n+1}{k-1}=\binom{n-1}{k-1}+2\binom{n-2}{k-2}+3\binom{n-3}{k-3}+\cdots +k\binom{n-k}{1}+(k+1)\binom{n-k-1}{0}.
\end{equation}
\end{lemm}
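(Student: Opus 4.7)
The plan is to prove the identity by a direct induction in which the right-hand sum satisfies the same Pascal recurrence as the binomial coefficient on the left. Let me write $S(n,k)=\sum_{i=1}^{k}i\binom{n-i}{k-i}$, so the goal is to show $S(n,k)=\binom{n+1}{k-1}$.

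First I would handle the base case $k=1$ for all $n\geq 1$: here $S(n,1)=1\cdot\binom{n-1}{0}=1=\binom{n+1}{0}$. (The edge case $n=k$ also gives a quick sanity check, since $S(n,n)=1+2+\cdots+n=n(n+1)/2=\binom{n+1}{2}$.)

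For the inductive step, apply Pascal's rule $\binom{n-i}{k-i}=\binom{n-i-1}{k-i}+\binom{n-i-1}{k-i-1}$ inside $S(n,k)$ and split the sum in two. The first piece is $\sum_{i=1}^{k}i\binom{(n-1)-i}{k-i}=S(n-1,k)$. In the second piece, the $i=k$ term vanishes because $\binom{n-k-1}{-1}=0$, so it collapses to $\sum_{i=1}^{k-1}i\binom{(n-1)-i}{(k-1)-i}=S(n-1,k-1)$. This yields the recurrence $S(n,k)=S(n-1,k)+S(n-1,k-1)$, which is precisely Pascal's identity $\binom{n+1}{k-1}=\binom{n}{k-1}+\binom{n}{k-2}$. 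Induction on $n$ (together with the base case) closes the argument.

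There is no real obstacle here; the only care needed is in tracking the upper summation index when the $i=k$ term of the second sum drops out. As an alternative (more conceptual) route, one can check the identity by generating functions: summing over $k$, $\sum_{k}S(n,k)z^{k}=\sum_{i\geq 1}iz^{i}(1+z)^{n-i}$, and after the substitution $u=z/(1+z)$ together with $\sum_{i\geq 0}iu^{i}=u/(1-u)^{2}$ this collapses to $z(1+z)^{n+1}$. Reading off the coefficient of $z^{k}$ recovers $\binom{n+1}{k-1}$, confirming the claim.
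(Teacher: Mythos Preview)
Your inductive argument is correct and follows essentially the same route as the paper: both apply Pascal's rule $\binom{n-i}{k-i}=\binom{n-i-1}{k-i-1}+\binom{n-i-1}{k-i}$ term by term, regroup the sum into $S(n-1,k-1)+S(n-1,k)$, and finish by induction (the paper anchors the induction at $n=3$ rather than at your $k=1$). Your generating-function alternative is a pleasant extra that the paper does not include.
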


\begin{proof}
We show this lemma by induction. 
\\

For $n=3$,
\begin{equation}
\binom{3}{1}+2=5=\binom{5}{1},
\end{equation}
\begin{equation}
\binom{3}{2}+2\binom{2}{1}+3=10=\binom{5}{2},
\end{equation}
\begin{equation}
\binom{3}{3}+2\binom{2}{2}+3\binom{1}{1}=10=\binom{5}{3}.
\end{equation}

For $n>3$, because $\binom{n-1}{k-1}=\binom{n-2}{k-2}+\binom{n-2}{k-1}$, (2.14) is extended like below:
\begin{equation}
=\binom{n-2}{k-2}+2\binom{n-3}{k-3}+3\binom{n-4}{k-4}+\cdots +k\binom{n-k-1}{0}
\end{equation}
\begin{equation}
+\binom{n-2}{k-1}+2\binom{n-3}{k-2}+3\binom{n-4}{k-3}+\cdots +k\binom{n-k-1}{1}+k+1.
\end{equation}
By induction, (2.18) is $\binom{n}{k-2}$ and (2.19) is $\binom{n}{k-1}$. Hence we obtain (2.14) because of $\binom{n}{k-2}+\binom{n}{k-1}=\binom{n+1}{k-1}$.

\end{proof}


\begin{prop}
\upshape
If $p$ is a prime number, $f_{1,p}(x)$ is irreducible over $\mathbb{Z}$.
\end{prop}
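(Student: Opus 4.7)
The plan is to combine a mod-$p$ computation of $f_{1,p}(x)$ with the root bound from Proposition 2.1 to rule out any nontrivial factorization. The key observation is that $f_{1,p}(x)$ should reduce modulo $p$ to a product of two coprime linear powers, which severely restricts the possible shapes of an integer factorization. The case $p = 2$ is immediate since $f_{1,2}(x) = x + 2$, so I focus on odd $p$.

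The first step is to identify $f_{1,p}(x) \pmod p$. Writing $f_{1,p}(x) = \sum_{k=1}^{p} k\, x^{p-k} = p\sum_{j=0}^{p-1} x^j - \sum_{j=0}^{p-1} j\, x^j$ and reducing mod $p$ gives
\[
f_{1,p}(x) \equiv -x \cdot \frac{d}{dx}\!\left[\frac{x^p - 1}{x - 1}\right] \pmod p.
\]
Since $x^p - 1 \equiv (x-1)^p$ in $\mathbb{F}_p[x]$, the bracketed expression is $(x-1)^{p-1}$, whose derivative is $(p-1)(x-1)^{p-2} \equiv -(x-1)^{p-2}$. This yields the clean identity $f_{1,p}(x) \equiv x(x-1)^{p-2} \pmod p$.

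Next, suppose for contradiction that $f_{1,p} = gh$ with $g, h \in \mathbb{Z}[x]$ monic of positive degree. Unique factorization in $\mathbb{F}_p[x]$ forces, up to ordering, $g \equiv x^{\epsilon}(x-1)^b$ and $h \equiv x^{1-\epsilon}(x-1)^{p-2-b} \pmod p$ for some $\epsilon \in \{0, 1\}$ and $0 \le b \le p-2$. Because $g(0)h(0) = f_{1,p}(0) = p$ is prime, exactly one of $g(0), h(0)$ is $\pm 1$, and that factor cannot be divisible by $x$ modulo $p$. After relabelling, $g \equiv (x-1)^b$ and $h \equiv x(x-1)^{p-2-b}$, with $\deg g = b$ and $\deg h = p - 1 - b$.

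Two cases finish the argument. If $1 \le b \le p-3$, then both $g(1)$ and $h(1)$ vanish modulo $p$, so $p^2$ would divide $f_{1,p}(1) = 1 + 2 + \cdots + p = p(p+1)/2$, which is impossible since $p \nmid (p+1)/2$ for odd $p$. If $b = p - 2$, then $h(x) = x - c$ is a monic linear factor, where $c$ is a nonzero integer root of $f_{1,p}$ (nonzero because $f_{1,p}(0) = p$) satisfying $c \equiv 0 \pmod p$, so $|c| \ge p$. This contradicts Proposition 2.1, which gives $|c| < (p+1)^{2/p} < p$ for $p \ge 3$ (since $(p+1)^2 < p^p$). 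The main obstacle I anticipate is the mod-$p$ identification of $f_{1,p}(x)$; once that congruence is in hand, the rest falls out from the combined constraints on $f_{1,p}(0)$, $f_{1,p}(1)$, and the root bound.
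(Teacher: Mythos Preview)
Your proof is correct and takes a genuinely different route from the paper's. The paper argues as follows: if $f_{1,p}=gh$ with $g(0)=\pm 1$, then Proposition~2.1 forces every root of $g$ to lie on the unit circle, so by Kronecker's theorem each such root is a root of unity; but a short computation with the identity $\alpha^{p}+\alpha^{p-1}+\cdots+\alpha+1=p+1$ shows that no nontrivial root of unity can satisfy $f_{1,p}(\alpha)=0$. Your argument instead rests on the mod-$p$ identity $f_{1,p}(x)\equiv x(x-1)^{p-2}$, then uses the values $f_{1,p}(0)=p$ and $f_{1,p}(1)=p(p+1)/2$ together with the upper root bound to eliminate every possible split of that factorization over $\mathbb{Z}$. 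Your approach is more elementary in that it avoids Kronecker's theorem entirely, and the congruence $f_{1,p}(x)\equiv x(x-1)^{p-2}\pmod p$ is a pleasant identity in its own right. On the other hand, the paper's method actually establishes the stronger fact that $f_{1,n}$ has no root of unity as a zero for \emph{any} $n$, which is then reused in the proof of Proposition~2.5 for prime powers; your mod-$p$ technique is tied specifically to prime $n$.
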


\begin{proof}
If $f_{1,p}(x)=x^{p-1}+2x^{p-2}+\cdots +(p-1)x+p$ is reducible in $\mathbb{Z}[x]$, we have the following decomposition in $\mathbb{Z}[x]$:
\begin{equation}
f_{1,p}(x)=(x^{m}+a_{m-1}x^{m-1}+\cdots +a_{1}x\pm 1)(x^{l}+b_{l-1}x^{l-1}+\cdots +b_{1}x\pm p).
\end{equation}
The absolute values of roots of the equation $g(x)=(x^{m}+a_{m-1}x^{m-1}+\cdots +a_{1}x\pm 1)$ are 1 because of Proposition 2.1 and the relation between roots and coefficients of $g(x)$. Further, all complex conjugations of the roots are roots of $g(x)=0$ because of $g(x)\in \mathbb{Z}[x]$. Hence by Kronecker's Theorem (see \cite{Kronecker}), the roots of $g(x)$ are roots of $z^{k}=1$ for some $k$.\\
\\
Let $\alpha$ be a root of $g(x)$. Then for some positive integer $d<p-1$, $\alpha^{d}=1$. Hence we have
\begin{equation}
\alpha^{d-1}+\alpha^{d-2}\cdots +\alpha+1=0.
\end{equation}
On the other hand, by the definition of $f_{1,p}(x)$, $\alpha$ satisfies the following equation:
\begin{equation}
\alpha^{p}+\alpha^{p-1}+\cdots + \alpha^{d}+\alpha^{d-1}+\alpha^{d-2}+\cdots +\alpha+1=p+1.
\end{equation}
By combining (2.21) with (2.22), and taking the absolute values, we have
\begin{equation}
p+1=|\alpha^{p}+\alpha^{p-1}+\cdots + \alpha^{d}+\alpha^{d-1}+\alpha^{d-2}+\cdots +\alpha+1|\leq p-d.
\end{equation}
Because this is inconsistent, $f_{1,p}(x)$ is irreducible over $\mathbb{Z}$.

\end{proof}

\begin{prop}
\upshape
Let $p$ be a prime number. Then $f_{1,p^{n}}(x)$ is irreducible over $\mathbb{Z}$ for any integer $n\geq1$.
\end{prop}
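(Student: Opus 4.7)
My plan is to adapt the proof of Proposition 2.4 almost verbatim, closing the one gap that arises because the constant term $p^{n}$ now has many divisors. The idea is still: suppose $f_{1,p^{n}}=g\cdot h$ in $\mathbb{Z}[x]$ with $g,h$ monic of positive degree; use Proposition 2.1 together with Kronecker's theorem on the factor whose constant term is $\pm 1$ to force all of its roots to be roots of unity; then contradict the identity $\alpha+\alpha^{2}+\cdots+\alpha^{p^{n}}=p^{n}$ via the triangle inequality. The new difficulty is that, a priori, the two factors could have constant terms $\pm p^{j}$ and $\pm p^{n-j}$ with $0<j<n$, so neither is automatically $\pm 1$.

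To rule this out, I would establish the mod-$p$ factorization
\[
f_{1,p^{n}}(x)\equiv x(x-1)^{p^{n}-2}\pmod p.
\]
Grouping the coefficients of $f_{1,p^{n}}(x)=\sum_{k=1}^{p^{n}}kx^{p^{n}-k}$ by $k\bmod p$ collapses the sum (each residue class of exponents forming a geometric progression) to
\[
f_{1,p^{n}}(x)\equiv\frac{x^{p^{n}}-1}{x^{p}-1}\cdot f_{1,p}(x)\pmod p,
\]
and Frobenius, together with the routine identity $f_{1,p}(x)\equiv x(x-1)^{p-2}\pmod p$ (obtained by writing $\sum_{j=1}^{p-1}jx^{j}$ as the derivative of a geometric series), yields the claim. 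Because $x$ appears only to the first power in $x(x-1)^{p^{n}-2}$, unique factorization in $\mathbb{F}_{p}[x]$ forces $x$ to divide exactly one of $g,h$ modulo $p$; the other factor has constant term prime to $p$ and, dividing $p^{n}$, it must equal $\pm 1$.

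From here the argument essentially copies Proposition 2.4. Proposition 2.1 combined with $\prod|\alpha|=1$ pins every root of the factor $h$ with $h(0)=\pm 1$ to the unit circle; Kronecker's theorem promotes them to roots of unity; finally, $|\alpha^{k}|=1$ together with $\alpha+\alpha^{2}+\cdots+\alpha^{p^{n}}=p^{n}$ and the equality case of the triangle inequality forces $\alpha^{k}=1$ for every $k\geq 1$, hence $\alpha=1$. This contradicts $f_{1,p^{n}}(1)=p^{n}(p^{n}+1)/2\neq 0$.

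The main obstacle is the mod-$p$ computation producing $x(x-1)^{p^{n}-2}$: once this is in hand, the possible constant terms in any $\mathbb{Z}[x]$-factorization are pinned down and everything reduces to the $p$-prime case. A Newton polygon computation at $p$---the polygon has exactly two slopes, a slope-$(-n)$ segment of length $1$ and a slope-$0$ segment of length $p^{n}-2$---would give the same conclusion and could substitute for the mod-$p$ step.
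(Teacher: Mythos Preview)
Your argument is correct. The handling of a putative factor with constant term $\pm 1$ is essentially the paper's Proposition~2.4 argument (your equality-case formulation of the triangle inequality is in fact slightly cleaner, and does not really require Kronecker's theorem at all once $|\alpha|=1$ is known). The genuine difference lies in how you exclude a factorization with constant terms $\pm p^{r}$ and $\pm p^{s}$, $r,s>0$. You go through the full mod-$p$ factorization $f_{1,p^{n}}(x)\equiv x(x-1)^{p^{n}-2}\pmod p$ (or, alternatively, the two-slope Newton polygon) to force one constant term to be a unit. The paper disposes of this case in a single line: if both constant terms are divisible by $p$, then the coefficient of $x$ in the product is $\pm(a_{1}p^{s}+b_{1}p^{r})\equiv 0\pmod p$, whereas the coefficient of $x$ in $f_{1,p^{n}}$ is $p^{n}-1\not\equiv 0\pmod p$. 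Your route yields more information---the complete factorization over $\mathbb{F}_{p}$ and the $p$-adic valuations of all the roots---but the paper's coefficient comparison is the minimal tool for the job; it amounts exactly to the observation that $0$ is a \emph{simple} root of $f_{1,p^{n}}$ modulo $p$, which is the only piece of your mod-$p$ computation the proof actually needs.
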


\begin{proof}
According to the proof of Proposition 2.4 above, $f_{1,p^{n}}(x)$ doesn't have any root of 1. Hence let's assume that $f_{1,p^{n}}(x)$ is decomposed into the following two factors in $\mathbb{Z}[x]$:
\begin{equation}
f_{1,p^{n}}(x)=(x^{m}+a_{m-1}x^{m-1}+\cdots +a_{1}x \pm p^{r})(x^{l}+b_{l-1}x^{l-1}+\cdots +b_{1}x \pm p^{s}).
\end{equation}
where $r+s=n$, and $r, s >0$.
Then by comparing the coefficients of $x$, $\pm(a_{1}p^{s}+b_{1}p^{r}) = p^{n}-1$. The left-hand side is divisible by $p$, but the right-hand side is not divisible by $p$. This is inconsistent.

\end{proof}


\section{The discriminant of $f_{1,n}(x)$}

\begin{prop}
The discriminant of $f_{1,n}(x)$ is $(-1)^{\frac{(n+2)(n-1)}{2}}2n^{n-3}(n+1)^{n-2}$.
\end{prop}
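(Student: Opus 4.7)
The plan is to use the standard identity $\mathrm{disc}(f)=(-1)^{d(d-1)/2}\prod_i f'(\alpha_i)$ for a monic polynomial of degree $d$ with roots $\alpha_1,\ldots,\alpha_d$; here $d=n-1$, and the exponent $(n-1)(n-2)/2$ agrees with the stated $(n+2)(n-1)/2$ modulo $2$ (they differ by $2(n-1)$), so it suffices to compute $\prod_i f_{1,n}'(\alpha_i)=2n^{n-3}(n+1)^{n-2}$.

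The key device, already visible in the proof of Proposition 2.1, is the auxiliary trinomial
\[
h(x):=(x-1)^2 f_{1,n}(x)=(x-1)g_{1,n}(x)=x^{n+1}-(n+1)x+n.
\]
From the factored form, differentiating at a root $\alpha=\alpha_i$ of $f_{1,n}$ gives
\[
h'(\alpha)=(\alpha-1)^2 f_{1,n}'(\alpha),
\]
while from the trinomial form $h'(x)=(n+1)(x^n-1)$, so $h'(\alpha)=(n+1)(\alpha^n-1)$. Moreover, $h(\alpha)=0$ gives $\alpha^{n+1}=(n+1)\alpha-n$, whence
\[
\alpha^n-1=\frac{n(\alpha-1)}{\alpha}.
\]
Substituting back and solving yields the clean formula
\[
f_{1,n}'(\alpha_i)=\frac{n(n+1)}{\alpha_i(\alpha_i-1)}.
\]

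Taking the product over all $n-1$ roots reduces the problem to computing two symmetric functions: from the constant term of $f_{1,n}$, $\prod_i\alpha_i=(-1)^{n-1}n$, and from $f_{1,n}(1)=1+2+\cdots+n=n(n+1)/2$ we get $\prod_i(\alpha_i-1)=(-1)^{n-1}f_{1,n}(1)=(-1)^{n-1}n(n+1)/2$. The signs cancel since $(-1)^{2(n-1)}=1$, and the magnitudes combine to $n^2(n+1)/2$, so
\[
\prod_{i=1}^{n-1}f_{1,n}'(\alpha_i)=\frac{[n(n+1)]^{n-1}}{n^2(n+1)/2}=2n^{n-3}(n+1)^{n-2}.
\]
Multiplying by $(-1)^{(n-1)(n-2)/2}=(-1)^{(n+2)(n-1)/2}$ gives the required expression.

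There is no real obstacle here; the only mild subtlety is spotting that the trinomial $h(x)=x^{n+1}-(n+1)x+n$ provides a way to express $f_{1,n}'(\alpha)$ in closed form, which bypasses any direct computation with the derivative $f_{1,n}'(x)=(n-1)x^{n-2}+2(n-2)x^{n-3}+\cdots+(n-1)$. (As a sanity check, one could also derive the same result from the resultant-style formula $\mathrm{disc}(h)=0$ applied with the decomposition $h=(x-1)^2 f_{1,n}$ and the identity $\mathrm{disc}(pq)=\mathrm{disc}(p)\mathrm{disc}(q)\mathrm{Res}(p,q)^2$ used on the squarefree part, but the derivative computation above is the shortest route.)
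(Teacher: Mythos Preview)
Your proof is correct and considerably shorter than the paper's. The paper never introduces the trinomial $h(x)=(x-1)^2 f_{1,n}(x)=x^{n+1}-(n+1)x+n$; instead it works one level down with $g_{1,n}(x)=(x-1)f_{1,n}(x)=x^n+x^{n-1}+\cdots+x-n$, relates $D(g_{1,n})$ to $D(f_{1,n})$ via $\prod_i(1-\alpha_i)^2=f_{1,n}(1)^2$, and then computes $D(g_{1,n})$ by evaluating the $(2n-1)\times(2n-1)$ Sylvester resultant $R(g,g')$ through a sequence of row and column operations, eventually arriving at $R(g,g')=(-1)^{n-1}n^{n-1}(n+1)^n/2$. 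Your route bypasses all of that linear algebra: passing to $(x-1)^2 f_{1,n}$ rather than $(x-1)f_{1,n}$ makes the derivative $h'(x)=(n+1)(x^n-1)$ essentially trivial, and the relation $\alpha^n-1=n(\alpha-1)/\alpha$ then collapses $f_{1,n}'(\alpha_i)$ to a closed form in $\alpha_i$ alone, so the whole discriminant reduces to two elementary symmetric evaluations. The paper's approach has the virtue of being mechanical and self-contained (no clever identity required), but yours exposes the structural reason the answer is so clean.
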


\begin{proof}
To calculate the discriminant $D(f_{1,n}(x))$, we use $g_{1,n}(x)=(x-1)f_{1,n}(x)$. By the definition of the discriminant, $D(g_{1,n}(x))=\prod_{i=1}^{n-1}(1-\alpha_{i})^{2}D(f_{1,n}(x))$, where $\alpha_{1}, \cdots, \alpha_{n-1}$ are roots of $f_{1,n}(x)$. On the other hand, by the definition of $f_{1,n}(x)$, $\prod_{i=1}^{n-1}(1-\alpha_{i})=f_{1,n}(1)=n(n+1)/2$. Hence we obtain
\begin{equation}
D(g_{1,n}(x))=\{n(n+1)/2\}^{2}D(f_{1,n}(x)).
\end{equation}
We calculate $D(g_{1,n}(x))$ by using Sylvester's determinant Theorem (see \cite{LinearAlgebra}): 
\begin{equation}
D(g_{1,n}(x))=(-1)^{n(n+1)/2}R(g, g')
\end{equation}
where, for simplicity, we denote $g=g_{1,n}(x)$ here.
\\
By the definition, 
\begin{equation}
g_{1,n}(x)=x^{n}+x^{n-1}+\cdots +x-n,
\end{equation}
hence we obtain $R(g, g')=$
\begin{equation}
\left|
\begin{array}{cccccccccccc}
1 & 1 & 1 & \cdots & 1 & 1 & -n & 0 & 0 & 0 & \cdots & 0 \\
0 & 1 & 1 & \cdots & 1 & 1 & 1 & -n & 0 & 0 & \cdots & 0 \\
0 & 0 & 1 & \cdots & 1 & 1 & 1 & 1 & -n & 0 & \cdots & 0 \\
\cdots & \cdots & \cdots & \cdots & \cdots & \cdots & \cdots & \cdots & \cdots & \cdots & \cdots \\
0 & 0 & 0 & \cdots & 1 & 1 & 1 & 1 & 1 & 1 & \cdots & -n \\
n & n-1 & n-2 & \cdots & 2 & 1 & 0 & 0 & 0 & 0 & \cdots & 0 \\
0 & n & n-1 & \cdots & 3 & 2 & 1 & 0 & 0 & 0 & \cdots & 0 \\
0 & 0 & n & \cdots & 4 & 3 & 2 & 1 & 0 & 0 & \cdots & 0 \\
\cdots & \cdots & \cdots & \cdots & \cdots & \cdots & \cdots & \cdots & \cdots & \cdots & \cdots \\
0 & 0 & 0 & \cdots & 0 & n & n-1 & n-2 & n-3 & n-4 & \cdots & 1 \\
\end{array}
\right|
\end{equation}
\\
Please note that this is a determinant of $(2n-1) \times (2n-1)$ matrix.
\\
Next, for $1\leq i \leq n-1$, by subtracting ``$i$-th row''$\times n$ from $(n-1+i)$-th row in (3.4) above, we have
\begin{equation}
\left|
\begin{array}{ccccccccccccc}
1 & 1 & 1 & 1 & \cdots & 1 & 1 & -n & 0 & 0 & 0 & \cdots & 0 \\
0 & 1 & 1 & 1 & \cdots & 1 & 1 & 1 & -n & 0 & 0 & \cdots & 0 \\
0 & 0 & 1 & 1 & \cdots & 1 & 1 & 1 & 1 & -n & 0 & \cdots & 0 \\
\cdots & \cdots & \cdots & \cdots & \cdots & \cdots & \cdots & \cdots & \cdots & \cdots & \cdots & \cdots \\
0 & 0 & 0 & 0 & \cdots & 1 & 1 & 1 & 1 & 1 & 1 & \cdots & -n \\
0 & -1 & -2 & -3 & \cdots & 2-n & 1-n & -n^{2} & 0 & 0 & 0 & \cdots & 0 \\
0 & 0 & -1 & -2 & \cdots & 3-n & 2-n & 1-n & -n^{2} & 0 & 0 & \cdots & 0 \\
0 & 0 & 0 & 0 & \cdots & 4-n & 3-n & 2-n & 1-n & -n^{2} & 0 & \cdots & 0 \\
\cdots & \cdots & \cdots & \cdots & \cdots & \cdots & \cdots & \cdots & \cdots & \cdots & \cdots & \cdots \\
0 & 0 & 0 & 0 & \cdots & 0 & -1 & -2 & -3 & -4 & -5 & \cdots & n^{2} \\
0 & 0 & 0 & 0 & \cdots & 0 & n & n-1 & n-2 & n-3 & n-4 & \cdots & 1 \\
\end{array}
\right|
\end{equation}
\\
For $n\leq i \leq 2n-2$, by adding summation of ``{\it from $(i-n+2)\mathchar`-$th row to $(n-1)\mathchar`-$th row}'' to $i$-th row, (3.5) is equal to the following determinant of $n\times n$ matrix:
\begin{equation}
\left|
\begin{array}{ccccccc}
-1 & n^{2}+n-2 & -3 & -4 & \cdots & -n+1 & -n \\
-1 & -2 & n^{2}+n-3 & -4 & \cdots & -n+1 & -n \\
-1 & -2 & -3 &  n^{2}+n-4 & \cdots & -n+1 & -n \\
\cdots & \cdots & \cdots & \cdots & \cdots & \cdots & \cdots \\
-1 & -2 & -3 &  -4 & \cdots & n^{2}+1 & -n \\
-1 & -2 & -3 &  -4 & \cdots & 1-n & n^{2} \\
n & n-1 & n-2 & n-3 & \cdots & 2 & 1 \\
\end{array}
\right|
\end{equation}
By adding ``$(n-1)$-th row''$\times n$ to $n$-th row, it is equal to the following:
\begin{equation}
(n+1)\times\left|
\begin{array}{ccccccc}
-1 & n^{2}+n-2 & -3 & -4 & \cdots & -n+1 & -n \\
-1 & -2 & n^{2}+n-3 & -4 & \cdots & -n+1 & -n \\
-1 & -2 & -3 &  n^{2}+n-4 & \cdots & -n+1 & -n \\
\cdots & \cdots & \cdots & \cdots & \cdots & \cdots & \cdots \\
-1 & -2 & -3 &  -4 & \cdots & n^{2}+1 & -n \\
-1 & -2 & -3 &  -4 & \cdots & 1-n & n^{2} \\
0 & -1 & -2 & -3 & \cdots & -(n-2) & n^{2}-n+1 \\
\end{array}
\right|
\end{equation}
Further, for $1\leq i \leq n-2$, by subtracting $i$-row from $(i+1)$-row, we obtain
\begin{equation}
(n+1)\times\left|
\begin{array}{ccccccc}
-1 & n^{2}+n-2 & -3 & -4 & \cdots & -n+1 & -n \\
0 & -n^{2}-n & n^{2}+n & 0 & \cdots & 0 & 0 \\
0 & 0 & -n^{2}-n & n^{2}+n & \cdots & 0 & 0 \\
0 & 0 & 0 & -n^{2}-n & \cdots & 0 & 0 \\
\cdots & \cdots & \cdots & \cdots & \cdots & \cdots & \cdots \\
0 & 0 & 0 & 0 & \cdots & -n^{2}-n & n^{2}+n \\
0 & -1 & -2 & -3 & \cdots & -(n-2) & n^{2}-n+1 \\
\end{array}
\right|
\end{equation}
Hence we obtain
\begin{equation}
=n^{n-2}(n+1)^{n-1}\times\left|
\begin{array}{ccccccc}
-1 & n^{2}+n-2 & -3 & -4 & \cdots & -n+1 & -n \\
0 & -1 & 1 & 0 & \cdots & 0 & 0 \\
0 & 0 & -1 & 1 & \cdots & 0 & 0 \\
0 & 0 & 0 & -1 & \cdots & 0 & 0 \\
\cdots & \cdots & \cdots & \cdots & \cdots & \cdots & \cdots \\
0 & 0 & 0 & 0 & \cdots & -1 & 1 \\
0 & -1 & -2 & -3 & \cdots & -(n-2) & n^{2}-n+1 \\
\end{array}
\right|
\end{equation}
\begin{equation}
=-n^{n-2}(n+1)^{n-1}\times\left|
\begin{array}{cccccc}
-1 & 1 & 0 & \cdots & 0 & 0 \\
0 & -1 & 1 & \cdots & 0 & 0 \\
0 & 0 & -1 & \cdots & 0 & 0 \\
\cdots & \cdots & \cdots & \cdots & \cdots & \cdots \\
0 & 0 & 0 & \cdots & -1 & 1 \\
-1 & -2 & -3 & \cdots & -(n-2) & n^{2}-n+1 \\
\end{array}
\right|
\end{equation}
\begin{equation}
=(-1)^{n-1}n^{n-2}(n+1)^{n-1}\times\left|
\begin{array}{cccccc}
-1 & -2 & -3 & \cdots & -(n-2) & n^{2}-n+1 \\
-1 & 1 & 0 & \cdots & 0 & 0 \\
0 & -1 & 1 & \cdots & 0 & 0 \\
0 & 0 & -1 & \cdots & 0 & 0 \\
\cdots & \cdots & \cdots & \cdots & \cdots & \cdots \\
0 & 0 & 0 & \cdots & -1 & 1 \\
\end{array}
\right|
\end{equation}
In (3.11), add $(n-1)$-th column to $(n-2)$-th column, add $(n-2)$-th column to $(n-3)$-th column, $\cdots$ by repeating this operation, we obtain
\begin{equation}
=(-1)^{n-1}n^{n-2}(n+1)^{n-1}\times\left|
\begin{array}{cccccc}
n(n+1)/2 & 0 & 0 & \cdots & 0 & 0 \\
0 & 1 & 0 & \cdots & 0 & 0 \\
0 & 0 & 1 & \cdots & 0 & 0 \\
0 & 0 & 0 & \cdots & 0 & 0 \\
\cdots & \cdots & \cdots & \cdots & \cdots & \cdots \\
0 & 0 & 0 & \cdots & 0 & 1 \\
\end{array}
\right|
\end{equation}

$=(-1)^{n-1}n^{n-1}(n+1)^{n}/2$.
\\

By combining this result above with (3.1) and (3.2), we obtain $D(f_{1,n}(x))=(-1)^{\frac{(n-1)(n+2)}{2}}2n^{n-3}(n+1)^{n-2}$.
\end{proof}

\begin{coro}
\upshape
For $n>3$, let $K_{n}$ be the minimal splitting field of $f_{1,n}(x)$ over $\mathbb{Q}$. Then $K_{n}$ has the following quadratic field $F=\mathbb{Q}(\sqrt{D(f_{1,n}(x))})$ as a sub field:
\begin{eqnarray}
F=\left\{ \begin{array}{ll}
\mathbb{Q}(\sqrt{-2l}) & (n=4l) \\
\mathbb{Q}(\sqrt{2l+1})  & (n=4l+1) \\
\mathbb{Q}(\sqrt{2l+1})  & (n=4l+2) \\
\mathbb{Q}(\sqrt{-2(l+1)})  & (n=4l+3) \\
\end{array} \right.
\end{eqnarray}
where we assume that $2l+1$ is not a square of rational integer for $n\equiv 1$ or $2$ (mod $4$).
\end{coro}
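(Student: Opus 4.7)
My plan is to combine two ingredients: the classical fact that $\sqrt{D(f)}$ lies in the splitting field of any separable polynomial $f$, and the explicit formula for $D(f_{1,n}(x))$ given by Proposition~3.1.

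For the first ingredient, I would let $\alpha_1,\dots,\alpha_{n-1}$ be the roots of $f_{1,n}(x)$ and set $\delta=\prod_{i<j}(\alpha_i-\alpha_j)$. Since every $\sigma\in\mathrm{Gal}(K_n/\mathbb{Q})$ permutes the $\alpha_i$, one has $\sigma(\delta)=\pm\delta$, so $\delta^2=D(f_{1,n}(x))\in\mathbb{Q}$ while $\delta\in K_n$. Hence $\mathbb{Q}(\sqrt{D(f_{1,n}(x))})\subseteq K_n$, and this is genuinely a quadratic subfield provided $D(f_{1,n}(x))$ is not a rational square --- automatic when the sign is negative, and guaranteed by the assumption ``$2l+1$ is not a square of a rational integer'' in the two remaining residue classes.

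The second, purely arithmetic, step is to identify $\mathbb{Q}(\sqrt{D(f_{1,n}(x))})$ explicitly by reducing $D=(-1)^{(n-1)(n+2)/2}\,2\,n^{n-3}(n+1)^{n-2}$ modulo $(\mathbb{Q}^\times)^2$ in each residue class of $n$ mod $4$. First I would check the parity of $(n-1)(n+2)/2$: it is odd exactly for $n\equiv 0,3\pmod 4$, accounting for the two imaginary cases. For the squarefree part of $|D|$, I would exploit the fact that exactly one of the exponents $n-3, n-2$ is even, so exactly one of $n^{n-3}, (n+1)^{n-2}$ is already a perfect square and the other contributes just its base modulo squares. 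Writing $n=4l+r$ for $r\in\{0,1,2,3\}$ and using $4l\equiv l$, $4l+2\equiv 2(2l+1)$, $4l+4\equiv l+1$ modulo $(\mathbb{Q}^\times)^2$, and then absorbing the leading factor of $2$ together with the sign, I expect to land on $-2l,\ 2l+1,\ 2l+1,\ -2(l+1)$ for $r=0,1,2,3$ respectively --- exactly the four lines of the statement.

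The only delicate point is the bookkeeping of $2$-adic valuations: one must carefully split $n$ or $n+1$ into $2^k\cdot(\text{odd})$ before reducing modulo squares, so that after combining with the leading factor of $2$ the residual power of $2$ in the squarefree part comes out $2^0$ or $2^1$ as required. I do not foresee any deeper obstacle; the entire corollary is a direct consequence of Proposition~3.1 plus the mod-$4$ parity analysis sketched above.
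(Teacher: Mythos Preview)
Your proposal is correct and follows essentially the same route as the paper: the paper's proof consists solely of writing out $D(f_{1,n}(x))$ explicitly in each residue class $n\equiv 0,1,2,3\pmod 4$ as a product in which the square part is visible, and then reading off the displayed quadratic field. Your version is slightly more polished in that you supply the Galois-theoretic reason why $\sqrt{D}\in K_n$ (which the paper takes for granted) and you organise the reduction modulo $(\mathbb{Q}^\times)^2$ via the parity of the exponents $n-3$ and $n-2$ rather than by brute-force factorisation, but the underlying computation is identical.
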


\begin{proof}
The comes from the following calculations of the discriminant $D(f_{1,n}(x))$.
\begin{eqnarray}
D(f_{1,n}(x))=\left\{ \begin{array}{ll}
-2l\times 4\times (4l)^{4l-4}\times (4l+1)^{4l-2} & (n=4l) \\
\{2^{2l}\times (4l+1)^{l-1}\times(2l+1)^{2l}\}^{2}\times(2l+1)^{-1}  & (n=4l+1) \\
\{2(2l+1)(4l+3)\}^{4l}(2l+1)^{-1}  & (n=4l+2) \\
-2(l+1)\times 4^{4l+1}(4l+3)^{4l}(l+1)^{4l}  & (n=4l+3) \\
\end{array} \right.
\end{eqnarray}
\end{proof}

\section{The generalization and a conjecture}

In this section, we generalize the polynomial $f_{1,n}(x)$ over $\mathbb{Z}$. In order to generalize it, first we study the case of $n=\infty$.
At a glance, it seems that we can't extend the definition of $f_{1,n}(x)$ for $n=\infty$. However, we can overcome this by considering a transformation $f_{1,n}(1/x)$.
\begin{equation}
f_{1,n}(1/x)=1/x^{n-1}+2/x^{n-2}+3/x^{n-3}+\cdots +(n-1)/x+n.
\end{equation}
By multiplying $x^{n-1}$ for (5.1), we define the ``inverse'' $\bar{f}_{1,n}(x)$ of $f_{1,n}(x)$,
\begin{equation}
\bar{f}_{1,n}(x)= x^{n-1}f_{1,n}(1/x)=1+2x+3x^{2}+\cdots +(n-1)x^{n-2}+nx^{n-1}.
\end{equation}
\\
By the definition of $\bar{f}_{1,n}(x)$, we can extend the definition in formal power series ring $\mathbb{Z}[[x]]$ for $n=\infty$:
\begin{equation}
\bar{f}_{1,\infty}(x)=1+2x+3x^{2}+\cdots +(n-1)x^{n-2}+nx^{n-1}+\cdots.
\end{equation}
By easy calculation, $\bar{f}_{1,\infty}(x)$ is decomposed into square of an element of $\mathbb{Z}[[x]]$:
\begin{equation}
\bar{f}_{1,\infty}(x)=(1+x+x^{2}+\cdots +x^{n}+\cdots)^{2}.
\end{equation}
\\
Hence we can generalize $f_{1,n}(x)$ by taking $m$-th power of the following infinite series $h(x)$:
\begin{equation}
h(x)=1+x+x^{2}+\cdots +x^{n}+\cdots.
\end{equation}
\begin{equation}
h(x)^{m} = 1+ mx + \binom{m+1}{2}x^{2} + \binom{m+2}{3}x^{3} + \cdots + \binom{m+n-1}{n}x^{n} + \cdots
\end{equation}

\begin{defi}
\upshape
For integers $n\geq 3$ and $m\geq 2$, we define a polynomial $_{m}f_{1,n}(x)$ over $\mathbb{Z}$.
\begin{eqnarray}
_{m}f_{1,n}(x) = \sum^{n-2}_{i=-1}\binom{m+i}{i+1}x^{n-2-i},
\end{eqnarray}
i.e., $_{m}f_{1,n}(x) = x^{n-1} + \binom{m}{1}x^{n-2} + \binom{m+1}{2}x^{n-3} + \cdots \binom{m+i}{i+1}x^{n-2-i} + \cdots + \binom{m+n-3}{n-2}x + \binom{m+n-2}{n-1}.$
\end{defi}

\begin{exam}
\upshape
For $n=3$, $f(x) = _{m}f_{1,3}(x) = x^2 + mx + \frac{(m+1)m}{2}$. Because the discriminant of $f(x)$ is $-m(m+2) < 0$, $f(x)$ is irreducible over $\mathbb{Z}$.
\end{exam}

\begin{exam}
\upshape
For $n=4$, $f(x) = _{m}f_{1,4}(x) = x^3 + mx^2 + \frac{(m+1)m}{2}x + \frac{(m+2)(m+1)m}{6}$. Because $f(x)$ has one real root and two complex number roots, it is irreducible over $\mathbb{Z}$. Further, because the discriminant of $f(x)$ is $-\frac{m^{2}(m+1)(m+2)(m+3)^{2}}{6}$, its Galois group of $f(x)$ over $\mathbb{Q}$ is $S_{3}$.
\end{exam}

\subsection{The irreducibility conjecture}
By considering the result in $\S$2, two examples above and the definition of $_{m}f_{1,n}(x)$, we have the following conjecture.

\begin{conji}
\upshape
For any integers $n\geq3$ and $m\geq 2$, $_{m}f_{1,n}(x)$ is irreducible over $\mathbb{Z}$. 
\end{conji}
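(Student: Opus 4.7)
The plan is to generalise the shift trick that proves Proposition 2.3. First I would compute ${}_{m}f_{1,n}(x+1)$ in closed form. Re-indexing the definition with $k=i+1$, expanding $(x+1)^{n-1-k}$ and evaluating the Vandermonde-style convolution
\begin{equation*}
\sum_{k=0}^{n-1-j}\binom{m+k-1}{k}\binom{n-1-k}{j}=[y^{n-1}]\,\frac{1}{(1-y)^{m}}\cdot\frac{y^{j}}{(1-y)^{j+1}}=\binom{m+n-1}{n-1-j},
\end{equation*}
one obtains the closed form
\begin{equation*}
{}_{m}f_{1,n}(x+1)=\sum_{k=0}^{n-1}\binom{m+n-1}{k}\,x^{n-1-k}.
\end{equation*}
This generalises equation (2.13) and is the key input: ${}_{m}f_{1,n}(x+1)$ is nothing but the reciprocal of the degree-$(n-1)$ truncation of $(1+x)^{m+n-1}$.

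Set $p:=m+n-1$; note $p\geq n+1$ because $m\geq 2$. When $p$ is prime, Eisenstein at $p$ finishes the proof at once: $p\mid\binom{p}{k}$ for $1\leq k\leq n-1<p$, and the constant term $\binom{p}{n-1}=\frac{p(p-1)\cdots(m+1)}{(n-1)!}$ has $p$-adic valuation exactly one because every factor of the numerator other than $p$ itself is smaller than $p$. This simultaneously recovers Proposition 2.3 (the case $m=2$) and disposes of infinitely many further pairs $(m,n)$.

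For composite $p$ the conjecture is equivalent to the assertion that the degree-$(n-1)$ truncation $\sum_{k=0}^{n-1}\binom{p}{k}x^{k}$ of the binomial $(1+x)^{p}$ is irreducible over $\mathbb{Z}$ for every $p>n$. For prime-power $p=q^{a}$ I would next try a Newton polygon argument at $q$, computing $v_{q}\!\binom{p}{k}$ by Kummer's base-$q$ carry count and hoping the resulting slopes are constant so that no non-trivial factorisation of degrees is possible. In parallel I would extend the Kronecker / root-of-unity strategy of Propositions 2.4 and 2.5: the identity $(1-x)^{m}\,x^{n-1}\,{}_{m}f_{1,n}(1/x)=1-x^{n}g_{m}(x)$, valid for a unique $g_{m}(x)\in\mathbb{Z}[x]$ of degree $m-1$ (for $m=2$ it reads $(1-x)^{2}\bar{f}_{1,n}(x)=1-(n+1)x^{n}+nx^{n+1}$, so $g_{2}(x)=(n+1)-nx$), should yield the analogue of Proposition 2.1's lower bound $|\alpha|\geq 1$ for every root $\alpha$ of ${}_{m}f_{1,n}(x)$; once that is in place, any hypothetical factor in $\mathbb{Z}[x]$ with constant term $\pm 1$ has only roots of unity as its roots by Kronecker's theorem, and a contradiction with the defining equation should follow as in the paper.

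The main obstacle is the case in which $p=m+n-1$ is composite with several distinct small prime factors: then no single prime gives Eisenstein, the Newton polygon at any one prime need not have constant slope, and the analytic control over $|\alpha|$ required to replay the Kronecker argument weakens as $m$ grows. A complete proof of the conjecture will very likely need to invoke existing results on truncated binomial polynomials (in the tradition of Schur, Coleman, and Filaseta) rather than succeed by a purely elementary argument in the spirit of $\S 2$.
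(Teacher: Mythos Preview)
The statement you are trying to prove is labelled a \emph{Conjecture} in the paper, and the paper offers no proof of it; there is therefore nothing to compare your argument against. What you have written is not a proof either, as you yourself acknowledge in the final paragraph: your argument settles only the case in which $p:=m+n-1$ is prime, sketches a hoped-for Newton-polygon attack for prime powers without carrying it out, and explicitly leaves the generic composite case open. So the honest summary is that the conjecture remains open both in the paper and in your proposal.

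That said, your reduction is correct and genuinely useful. The identity
\[
{}_{m}f_{1,n}(x+1)=\sum_{k=0}^{n-1}\binom{m+n-1}{k}\,x^{\,n-1-k}
\]
is valid (the Vandermonde convolution you wrote checks out), it properly generalises the paper's equation~(2.13), and via Eisenstein at $p=m+n-1$ it immediately proves irreducibility whenever $m+n-1$ is prime --- a result that strictly extends Proposition~2.3 (the case $m=2$) and is not stated anywhere in the paper. Framing the conjecture as irreducibility of the degree-$(n-1)$ truncation of $(1+x)^{m+n-1}$ is a clean reformulation that connects it to the Schur--Filaseta literature you mention. Just be clear that this is a \emph{partial result plus a strategy}, not a proof of Conjecture~4.1.
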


\section{Class Field Theory in case of $n = 4$ and $m=2$}
In this section, we show an example of non-abelian class field theory for $f(x) = f_{1,4}(x) = x^3 + 2x^2 + 3x +4$. Recall that the discriminant of $f(x)$ is $-2^{3}5^{2}$ and its Galois group $G_{4}$ of the minimal splitting field $L$ of $f(x)$ over $\mathbb{Q}$ is $S_{3}$. Furthermore, the minimal splitting field $L$ has only one quadratic sub field $K = \mathbb{Q}(\sqrt{-2})$.

\begin{thm} 
\upshape
A prime number $p$ splits completely in $L$ if and only if $p=x^{2} + 2y^{2}$ for some integers $x, y$ which satisfies $xy\equiv 0$ (mod 15). 
\\
\\
In other word, $p$ splits completely in $L$ if and only if $a(p) = 2$ in the following infinite series:
\begin{equation}
\theta(\tau) = \sum_{\mathfrak{a}\in I_{\mathfrak{m}}}\chi(\mathfrak{a})q^{N(\mathfrak{a})} = \sum^{\infty}_{n=1}a(n)q^{n}
\end{equation}
\\
\\
where, $\mathfrak{m} = (5\sqrt{-2})$ is an ideal of integer ring $\mathcal{O}_{K} =\mathbb{Z}[\sqrt{-2}]$, $I_{\mathfrak{m}} = \{(\alpha)|\alpha \in K^{*}, (\alpha, \mathfrak{m}) = 1\}$, and $H = \{(\alpha)|\alpha \in K^{*}, \alpha \equiv l$ (mod $\mathfrak{m}$) for some $l\in \mathbb{Z}$ $\}\cup \{a + b\sqrt{-2}|a, b \in \mathbb{Z}$ and $ab\sqrt{-2}\equiv 0$ (mod $\mathfrak{m}$)$\}$. $\chi : I_{\mathfrak{m}}/H \rightarrow \mathbb{C}^{*}$ is a Hecke character defined by $\mathfrak{p}_{3} = (1+\sqrt{-2})\mapsto \omega = (1+\sqrt{-3})/2)$, and $H$ maps to 1.
\end{thm}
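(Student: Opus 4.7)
The plan is to translate Theorem 5.1 into class field theory for the cyclic cubic abelian extension $L/K$ with $K=\mathbb{Q}(\sqrt{-2})$, and then unpack the resulting congruence into the stated elementary form. Since $\mathrm{Gal}(L/\mathbb{Q})\simeq S_{3}$ has unique quadratic subfield $K$ (fixed by $A_{3}$), the extension $L/K$ is cyclic of order $3$, so by Artin reciprocity $L$ corresponds to an index-$3$ subgroup $H$ of a ray class group $Cl_{\mathfrak{m}}(K)$ such that a prime $\mathfrak{p}$ of $K$ (coprime to $\mathfrak{m}$) splits in $L/K$ iff $[\mathfrak{p}]\in H$. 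Hence a rational prime $p$ splits completely in $L$ iff it splits in $K$ (equivalently $p=x^{2}+2y^{2}$) and the prime $\mathfrak{p}=(x+y\sqrt{-2})$ lies in $H$.

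Next I would pin down the conductor. Only $p=2,5$ can ramify in $L$ since the discriminant of $f_{1,4}$ is $-2^{3}5^{2}$. A Newton polygon analysis of $f_{1,4}$ over $K_{(\sqrt{-2})}$ (the valuations of $a_{0},\dots,a_{3}$ are $(4,0,2,0)$) gives a factorization (linear)$\cdot$(quadratic); because $L/K$ is Galois of prime degree $3$, the number of irreducible factors of $f_{1,4}$ over the completion must be $1$ or $3$, so the quadratic itself splits and $(\sqrt{-2})$ is unramified---in fact completely split---in $L/K$. At $p=5$ (inert in $K$) we have $f_{1,4}\equiv(x-1)^{3}\pmod{5}$, so $(5)$ is totally and tamely ramified in $L/K$; the conductor of $L/K$ therefore divides $(5)$, and inflating to $\mathfrak{m}=(5\sqrt{-2})$ is harmless because $\mathcal{O}_{K}/(\sqrt{-2})\cong\mathbb{F}_{2}$ contributes trivially. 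A direct computation then gives $(\mathcal{O}_{K}/\mathfrak{m})^{\times}\cong\mathbb{F}_{25}^{\times}$, and since $-1\not\equiv 1\pmod{\mathfrak{m}}$,
\[
Cl_{\mathfrak{m}}(K)\;\cong\;\mathbb{F}_{25}^{\times}/\{\pm 1\}\;\cong\;\mathbb{Z}/12,
\]
which is cyclic with a unique subgroup of index $3$.

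I would then verify that the $H$ in the statement is exactly this unique order-$4$ subgroup. Under the isomorphism above, rational integers $l$ coprime to $10$ exhaust $\mathbb{F}_{5}^{\times}\subset\mathbb{F}_{25}^{\times}$, while the elements $a+b\sqrt{-2}$ with $5\mid a$, $5\nmid b$, and $a$ odd land in the coset $\sqrt{-2}\cdot\mathbb{F}_{5}^{\times}$; because $(\sqrt{-2})^{2}=-2\in\mathbb{F}_{5}^{\times}$ their union $\mathbb{F}_{5}^{\times}\cup\sqrt{-2}\cdot\mathbb{F}_{5}^{\times}$ is itself a subgroup of order $8$, which descends to an order-$4$ subgroup of $Cl_{\mathfrak{m}}(K)$, and by uniqueness this is $H$. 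Unpacking $\mathfrak{p}=(x+y\sqrt{-2})\in H$ gives $5\mid xy$. To reconcile this with the stated condition $xy\equiv 0\pmod{15}$, I would use the elementary observation that $3\mid xy$ is automatic whenever a prime $p>3$ is represented as $x^{2}+2y^{2}$: if both $x,y$ were coprime to $3$, then $x^{2}+2y^{2}\equiv 1+2\equiv 0\pmod{3}$, forcing $p=3$. Hence $5\mid xy\Longleftrightarrow 15\mid xy$ for the primes in question.

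Finally, for the Hecke-theta reformulation, $\chi$ is the nontrivial cubic character of $Cl_{\mathfrak{m}}(K)/H\cong\mathbb{Z}/3$ with $\chi(\mathfrak{p}_{3})$ a primitive cube root of unity; as a finite-order Hecke character of $K$ its theta series $\theta(\tau)=\sum_{\mathfrak{a}}\chi(\mathfrak{a})q^{N(\mathfrak{a})}$ is a weight-$1$ cusp form whose $p$-th coefficient equals $\chi(\mathfrak{p})+\overline{\chi(\mathfrak{p})}=2\,\mathrm{Re}\,\chi(\mathfrak{p})$ at a split prime and vanishes at inert or ramified $p$. Because $\chi(\mathfrak{p})\in\mu_{3}$, the value $a(p)=2$ is attained precisely when $\chi(\mathfrak{p})=1$, i.e.\ when $\mathfrak{p}\in H$, which by the previous step is exactly the condition that $p$ split completely in $L$. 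The main technical obstacles are the ramification computation at $p=2$ (showing $(\sqrt{-2})$ is actually unramified in $L/K$ even though $f_{1,4}$ has a repeated root mod $2$) and the identification of the somewhat awkwardly described $H$ with the unique index-$3$ subgroup of $Cl_{\mathfrak{m}}(K)$; the remaining steps are standard class field theory together with the elementary mod-$3$ observation.
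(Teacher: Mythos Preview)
Your argument is correct and follows the same class-field-theoretic framework that the paper has in mind, but you supply far more detail than the paper does. The paper's proof is really a two-line sketch: it records the same elementary mod-$3$ observation you use, then notes the identity $(a+b\sqrt{-2})^{3}=a(a^{2}-6b^{2})+b(3a^{2}-2b^{2})\sqrt{-2}$ and checks that the product $ab(a^{2}-6b^{2})(3a^{2}-2b^{2})\equiv 3ab(a^{4}-b^{4})\equiv 0\pmod{5}$, which is precisely the statement that every cube in $(\mathcal{O}_{K}/(5))^{\times}$ lands in the set $\{x+y\sqrt{-2}:5\mid xy\}$; it then asserts without proof that $I_{\mathfrak m}/H\cong\mathbb{Z}/3$ generated by $\mathfrak{p}_{3}$ and leaves the rest to the reader. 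Your route is structurally cleaner---you compute the conductor via the Newton polygon at $(\sqrt{-2})$ and the total tame ramification at $(5)$, identify $Cl_{\mathfrak m}(K)\cong\mathbb{Z}/12$, and invoke uniqueness of the index-$3$ subgroup---whereas the paper's cubing trick is a pleasant computational shortcut that bypasses the explicit group structure. Either way one still needs (and the paper omits) the verification that $L$ is exactly the class field attached to this $H$; your conductor calculation plus the uniqueness argument handles this, so your write-up actually closes a gap the paper leaves open.
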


\begin{proof}
The core is the following two simple results:
\\
(1) If $p = x^2 + 2y^2$ for some integers $x,y$, then $xy\equiv 0$ (mod 3).
\\
Because $p^{2} \equiv x^{4} +y^{4} + x^{2}y^{2} \equiv 1$ (mod 3), if $xy\not\equiv 0$ (mod 3), it indicates $3\equiv1$ (mod 3). This is a contradiction. 
\\
(2) If we put $(a+b\sqrt{-2})^{3} = X+\sqrt{-2}Y$, then $XY\equiv 0$ (mod 5).
\\
Actually, $(a+b\sqrt{-2})^{3} = a(a^{2}-6b^{2}) + b(3a^{2}-2b^{2})\sqrt{-2}$. Hence $XY = ab(a^{2}-6b^{2})(3a^{2}-2b^{2}) \equiv 3ab(a^{4}-b^{4})\equiv 0$ (mod 5). As an aside, the author thought he met God of Numbers in these calculations.
\\
\\
Because it is easy to show that the generator of $I_{\mathfrak{m}}/H$ is a class of an ideal $\mathfrak{p}_{3}=(1+\sqrt{-2})$, and $I_{\mathfrak{m}}/H$ is isomorphic to $\mathbb{Z}/3\mathbb{Z}$, leave it to the readers.
\end{proof}

\begin{coro}
\upshape
The infinite series (5.1) cannot be expressed by a product of two Dedekind's $\eta(\tau)$ functions $\eta(a\tau)\eta(b\tau)$ ($a,b\geq 1$), where $\eta(\tau) = e^{2\pi i\tau/24}\prod_{n=1}^{\infty}(1-e^{2\pi i\tau n}) = q^{1/24}\sum_{n\in \mathbb{Z}}(-1)^{n}q^{n(3n-1)}$ ($q=e^{2\pi i\tau}$ and $Im(\tau)>0$).
\end{coro}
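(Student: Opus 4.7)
The plan is to suppose for contradiction that $\theta(\tau)=\eta(a\tau)\eta(b\tau)$ for some integers $a,b\geq 1$, and to produce incompatible $q$-expansions. I would first match the leading exponents at $i\infty$: since $(1)\in H$ is the only ideal of norm $1$, we have $\theta(\tau)=q+\sum_{n\geq 2}a(n)q^{n}$, while $\eta(a\tau)\eta(b\tau)=q^{(a+b)/24}\prod_{n\geq 1}(1-q^{an})(1-q^{bn})$ has leading exponent $(a+b)/24$. This forces $a+b=24$, leaving the twelve candidates $(a,b)$ with $1\leq a\leq b$ and $a+b=24$.

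Second, I would read off the small Fourier coefficients of $\theta$ from its CM description. Since $\mathfrak{m}=(5\sqrt{-2})$, any ideal divisible by $(\sqrt{-2})$ or by a prime above $5$ is excluded from $I_{\mathfrak{m}}$; combined with the facts that $2$ ramifies in $K=\mathbb{Q}(\sqrt{-2})$ and that $5$ and $7$ are inert there (because $-2$ is not a square modulo $5$ or $7$), this yields $a(n)=0$ for $n\in\{2,4,5,6,7,8\}$. For $n=3$, the splitting $3=\mathfrak{p}_{3}\bar{\mathfrak{p}}_{3}$ together with $(3)\in H$ gives $\chi(\bar{\mathfrak{p}}_{3})=\chi(\mathfrak{p}_{3})^{-1}=\omega^{-1}$, so $a(3)=\omega+\omega^{-1}\neq 0$.

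Third, I would rule out each remaining candidate $(a,b)$ by exhibiting a coefficient where $\theta$ and $\eta(a\tau)\eta(b\tau)$ disagree. When $a\geq 3$, the expansion $\eta(a\tau)\eta(b\tau)=q-q^{a+1}-q^{b+1}+\cdots$ has vanishing coefficient of $q^{3}$, contradicting $a(3)\neq 0$. For $(a,b)=(1,23)$, Euler's pentagonal identity $\prod_{n\geq 1}(1-q^{n})=1-q-q^{2}+q^{5}+q^{7}-\cdots$ together with $\prod(1-q^{23n})\equiv 1\pmod{q^{23}}$ forces $\eta(\tau)\eta(23\tau)\equiv q-q^{2}-q^{3}+q^{6}+q^{8}+\cdots\pmod{q^{24}}$, so the $q^{6}$ coefficient is $+1$ versus $a(6)=0$. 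For $(a,b)=(2,22)$, the analogous truncation modulo $q^{22}$ gives a $q^{5}$ coefficient of $-1$, versus $a(5)=0$.

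The main obstacle is the need to treat the two ``low'' cases $a=1,2$ separately from the generic $a\geq 3$ case, since only there does the $q^{3}$-coefficient of the eta product survive, and each low case requires its own discriminating coefficient drawn from the pentagonal series up to $q^{7}$. Beyond this bookkeeping, everything reduces to elementary facts about the splitting of small primes in $\mathcal{O}_{K}=\mathbb{Z}[\sqrt{-2}]$ and the definition of $\chi$; no deeper modular-forms machinery (e.g.\ Ligozat's level-and-character conditions for eta products) is needed, though it would give an alternative route by comparing the level $400$ of $\theta$ against the levels admissible for $\eta(a\tau)\eta(b\tau)$.
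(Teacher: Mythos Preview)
Your proposal is correct and follows the same underlying strategy as the paper---compare the $q$-expansion of $\theta$ with that of $\eta(a\tau)\eta(b\tau)$---but your execution is considerably more explicit. The paper simply writes the exponent condition coming from the pentagonal-number form of $\eta$, namely $a(6n-1)^{2}+b(6m-1)^{2}=12p$ for a prime $p$ with $a(p)=2$, and asserts that this fails, referring the reader to a spreadsheet check; it does not isolate $a+b=24$ or carry out any case analysis.

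By contrast, you first force $a+b=24$ from the leading exponent, then compute $a(n)$ for $n\leq 8$ from the arithmetic of $\mathbb{Z}[\sqrt{-2}]$ and the conductor $\mathfrak{m}$, and finally eliminate each of the finitely many candidates $(a,b)$ by pointing to a specific mismatching coefficient ($q^{3}$ for $a\geq 3$, $q^{6}$ for $(1,23)$, $q^{5}$ for $(2,22)$). This turns the paper's computational remark into a self-contained argument. One small simplification: for $(a,b)=(1,23)$ you could already use the $q^{2}$ coefficient (which is $-1$ on the eta side and $0$ on the $\theta$ side) rather than going out to $q^{6}$.
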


\begin{proof}
If $\theta(\tau) = \eta(a\tau)\eta(b\tau)$ for some $a, b \geq 1$, the following equation must state by considering index of $q$:
\begin{equation}
a(6n-1)^{2} + b(6m-1)^{2} = 12p
\end{equation}
where $p$ is a prime number which splits completely in $L$ and $n, m\in \mathbb{Z}$. However, it is not possible. The reader can confirm this easily by using spread sheet.
\end{proof}


--- The contact information of the author---
\\
email: canjinana@me.com
\\

\end{document}